\newcommand{\bib}{\bibitem}
\def\card{\operatorname{card}}
\def\vep{\varepsilon}
\def\sup{\operatorname{sup}}
\def\supp{\operatorname{supp}}
\def\en{\mathbb{N}}
\def\er{\mathbb{R}}
\def\om{\omega}
\def\mc{\mathcal}
\def\Int{\operatorname{Int}}
\def\vvs{\vskip 2 mm}
\newtheorem{theorem}{Theorem}
\newtheorem{lemma}[theorem]{Lemma}
\newtheorem{proposition}[theorem]{Proposition}
\newtheorem{corollary}[theorem]{Corollary}
\theoremstyle{remark}
\newtheorem*{remark*}{Remark}
\newcommand\absb[2]{\csname#1l\endcsname|#2\csname#1r\endcsname|}
\newcommand\norm[1]{\mathopen\|#1\mathclose\|}
\newcommand\normb[2]{\csname#1l\endcsname\|#2\csname#1r\endcsname\|}
\newcommand\ga{\gamma}
\newcommand\R{\mathbb R}
\begin{document}

\title{On uniformly differentiable mappings from $\ell_\infty(\Gamma)$}

\begin{abstract}
In 1970 Haskell Rosenthal proved that if  $X$ is a Banach space,
$\Gamma$ is an infinite index set, and $T:\ell_\infty(\Gamma)\to X$
is a bounded linear operator such that
$\inf_{\gamma\in\Gamma}\|T(e_\gamma)\|>0$
then $T$ acts as an isomorphism on $\ell_\infty(\Gamma')$, for some
$\Gamma'\subset\Gamma$ of the same cardinality as $\Gamma$. Our main result
is a nonlinear strengthening of this theorem. More precisely,
under the assumption of GCH and the regularity of $\Gamma$, we show
that if  ${F}:B_{\ell_\infty(\Gamma)}\to X$ is uniformly differentiable and
 such that $\inf_{\gamma\in\Gamma}\|{F}(e_\gamma){-F(0)}\|>0$ then
 there exists $x\in B_{\ell_\infty(\Gamma)}$ such that $d{F}(x)[\cdot]$ is
 a bounded linear operator which
 acts as an isomorphism on $\ell_\infty(\Gamma')$, for some
$\Gamma'\subset\Gamma$ of the same cardinality as $\Gamma$.
\end{abstract}

\author{Petr H\'ajek}
\address{Mathematical Institute, Czech Academy of Science, \v{Z}itn\'{a} 25, 115 67 Praha 1,
Czech Republic, and Department of Mathematics, Faculty of Electrical Engineering, Czech Technical University in Prague, Zikova 4, 160 00, Prague}
\email{hajek@math.cas.cz}

\author{Eva Perneck\'a}

\address{ Institute of Mathematics, Polish Academy of Sciences, ul. \'{S}niadeckich 8, 00-956 Warszawa, Poland.}
\email{e.pernecka@impan.pl}

\thanks{The work was supported in part by GA\v CR 16-073785, RVO: 67985840 and SVV-2014-260106.}
\subjclass[2010]{46B20, 46T20}
\maketitle

\section{Introduction}
The structural stability of $C(K)$ spaces under bounded
linear operators has been studied extensively in the past.
According to Pe\l czy\'nski, if $X$ is a Banach space and
$T: c_0\to X$ is a non-compact linear operator,
then $c_0$ contains a linear subspace $Z$ isomorphic to $c_0$ such that
$T\upharpoonright_Z$ is an isomorphism (see \cite{Pel}, \cite[Theorem 4.51]{FHHMZ}).
In particular, $X$ contains a copy of $c_0$.
Similarly, if  $T:\ell_\infty\to X$ is a
non-weakly compact linear operator, then $\ell_\infty$
contains a linear subspace $Z$ isomorphic to $\ell_\infty$ such that
$T\upharpoonright_Z$ is an isomorphism (see \cite[Proposition 2.f.4]{LT}).
In particular, $X$ again contains a copy of $\ell_\infty$. Another version
of this result was shown by Rosenthal. We denote by
$e_\gamma=\chi_{\{\gamma\}}\in\ell_\infty(\Gamma), \gamma\in\Gamma$,
the standard unit vectors.

\begin{theorem}\label{rose}\cite{R}
If $\Gamma$ is an infinite set and
$T:\ell_\infty(\Gamma)\to X$ is a bounded linear operator
such that  $\inf_{\gamma\in\Gamma}\|T(e_\gamma)\|>0$ then there exists
$\Gamma'\subset\Gamma$ of the same cardinality as $\Gamma$ such that
$T\upharpoonright_{\ell_\infty(\Gamma')}$ is an isomorphism.
\end{theorem}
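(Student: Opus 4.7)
The plan is to pass to the dual and reduce the problem to a combinatorial extraction on $\Gamma$. Set $\delta := \inf_{\gamma \in \Gamma}\|Te_\gamma\| > 0$. By Hahn-Banach, choose for each $\gamma$ a functional $x^*_\gamma \in B_{X^*}$ with $x^*_\gamma(Te_\gamma) \geq \delta$, and define $\mu_\gamma := T^*x^*_\gamma \in \ell_\infty(\Gamma)^* \cong ba(\Gamma)$, the space of bounded finitely additive signed measures on $\mathcal{P}(\Gamma)$. Each $\mu_\gamma$ satisfies $\|\mu_\gamma\| \leq \|T\|$ and $\mu_\gamma(\{\gamma\}) \geq \delta$.

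Next, decompose $\mu_\gamma = \nu_\gamma + \rho_\gamma$ into its countably additive and purely finitely additive parts. The countably additive summand $\nu_\gamma$ is represented by an $\ell_1(\Gamma)$-vector of norm at most $\|T\|$; it has countable support, and since purely finitely additive measures vanish on singletons, $\nu_\gamma(\gamma) \geq \delta$. The crucial step is then a combinatorial extraction: by transfinite induction, shrinking the index set at each stage and exploiting the countability of the supports of the $\nu_\gamma$ together with an appropriate $\Delta$-system/cardinal-arithmetic argument, I would produce $\Gamma' \subset \Gamma$ with $|\Gamma'| = |\Gamma|$ such that for every $\gamma \in \Gamma'$ the $\ell_1$-mass $\sum_{\beta \in \Gamma' \setminus \{\gamma\}} |\nu_\gamma(\beta)|$ is smaller than, say, $\delta/4$.

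With $\Gamma'$ in hand, for any $x \in \ell_\infty(\Gamma')$ of norm one, I would select $\gamma_0 \in \Gamma'$ with $|x(\gamma_0)|$ close to $1$. The near-biorthogonality of the $\{\nu_\gamma\}_{\gamma \in \Gamma'}$ then forces $|\nu_{\gamma_0}(x)| \geq \delta/2$, and consequently, if the purely finitely additive residual $\rho_{\gamma_0}(x)$ is controlled,
$$
\|Tx\| \;\geq\; |x^*_{\gamma_0}(Tx)| \;=\; |\mu_{\gamma_0}(x)| \;\geq\; c
$$
for some absolute $c > 0$. Combined with $\|Tx\| \leq \|T\|\|x\|_\infty$, this establishes that $T\upharpoonright_{\ell_\infty(\Gamma')}$ is an isomorphism onto its image.

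The main obstacle is twofold. First, the combinatorial extraction for uncountable $\Gamma$ is subtle: one must exploit regularity/cofinality of $|\Gamma|$ and cardinal arithmetic to guarantee that the extracted $\Gamma'$ has the same cardinality as $\Gamma$ rather than collapsing to a smaller set. Second, one must suppress the purely finitely additive residuals $\rho_\gamma$, which do not see any countable subset of $\Gamma$ but can nevertheless act nontrivially on vectors of $\ell_\infty(\Gamma')$; handling them plausibly requires a further refinement of $\Gamma'$ exploiting the diffuse structure of purely finitely additive measures, or a more delicate initial choice of the $x^*_\gamma$ that renders the $\mu_\gamma$ countably additive from the outset.
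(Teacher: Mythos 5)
This statement is not proved in the paper; it is quoted from Rosenthal \cite{R} (Proposition~1.2 there, together with the remark that the weaker hypothesis $\inf_\gamma\|Te_\gamma\|>0$ suffices). Your overall strategy --- dualize, set $\mu_\gamma=T^*x^*_\gamma$ viewed in $ba(\Gamma)$, extract $\Gamma'$ on which the family is almost biorthogonal, and then test $Tx$ against $x^*_{\gamma_0}$ for a coordinate $\gamma_0$ where $|x(\gamma_0)|$ is nearly $1$ --- is exactly the spirit of Rosenthal's argument. But there is a genuine gap, and you have located it yourself: the purely finitely additive residuals $\rho_\gamma$. Splitting $\mu_\gamma=\nu_\gamma+\rho_\gamma$ and running the extraction only on the $\ell_1(\Gamma)$ parts cannot close the argument, because $|\rho_{\gamma_0}(x)|$ can be as large as $\|T\|$ on vectors $x\in\ell_\infty(\Gamma')$, there is no reason the $x^*_\gamma$ can be chosen so that $T^*x^*_\gamma$ is countably additive, and suppressing $\card\Gamma$ many purely finitely additive residuals \emph{simultaneously} by shrinking $\Gamma'$ is precisely the hard combinatorial content you were trying to avoid. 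As written, the final lower bound $\|Tx\|\ge|\mu_{\gamma_0}(x)|\ge c$ does not follow.

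The missing ingredient is Rosenthal's lemma on relatively disjoint families of measures, which is designed to make the decomposition unnecessary: for any uniformly bounded family $\{\mu_\gamma\}_{\gamma\in\Gamma}\subset ba(\Gamma)$ and any $\ve>0$ there is $\Gamma'\subset\Gamma$ with $\card\Gamma'=\card\Gamma$ such that $|\mu_\gamma|(\Gamma'\setminus\{\gamma\})<\ve$ for all $\gamma\in\Gamma'$. Its proof is a disjointification argument (partition $\Gamma$ into $\card\Gamma$ pieces of full cardinality; if each piece failed, summing the witnesses would contradict the uniform bound on the total variations), valid for arbitrary infinite $\Gamma$ and for finitely additive measures as they stand. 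With $\ve=\delta/4$ one gets $|\mu_{\gamma_0}(x)|\ge |x(\gamma_0)|\,\mu_{\gamma_0}(\{\gamma_0\})-|\mu_{\gamma_0}|(\Gamma'\setminus\{\gamma_0\})\ge\delta/2$ with no residual term, and the theorem follows. Note also that even for the countably additive parts your proposed $\Delta$-system extraction is underspecified: the coordinates of $\nu_\gamma$ below any fixed threshold still sum to as much as $\|T\|$ over the countable support, so disjointness of finite ``essential supports'' does not by itself yield $\sum_{\beta\in\Gamma'\setminus\{\gamma\}}|\nu_\gamma(\beta)|<\delta/4$; the summation and pigeonhole argument of Rosenthal's lemma is needed there as well.
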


To be precise, in the statement of \cite[Proposition 1.2]{R}, the assumption
is that  $T\upharpoonright_{c_0(\Gamma)}$ is an isomorphism. However, it is
noted later on that the weaker assumption (as stated here) is sufficient.

The  Pe\l czy\'nski-type result has been
generalized into the setting of uniformly differentiable mappings in
\cite{DH}, \cite{CHL}, \cite[Theorem 6.45]{HJ}. The
Rosenthal-type result will be generalized in the present note.

We refer to \cite{HJ} for  background in smoothness and to \cite{FHHMZ}
for  background in general Banach space theory. Facts
concerning set theoretical aspects can be found in \cite{J}.

For the purposes of this note
we will recall some of the basic concepts.

A non-decreasing function
$\omega\colon[0,+\infty)\to [0,+\infty)$ continuous at $0$
with $\omega(0)=0$ is called a \emph{modulus}.
Let $(P,\rho)$ and $(Q,\sigma)$ be metric spaces.
We say that a
continuous mapping $f\colon P\to Q$ is uniformly continuous if
there exists a modulus function $\omega_f$ such that
$$
\omega_f(\delta)\ge\sup\{\sigma(f(x),f(y)), x,y\in P, \rho(x,y)\leq\delta\}\;{\text{for all}\;\delta\geq 0}.
$$
If the above inequality holds we will say that $f$ has modulus
of continuity $\omega_f$. In this note we are not concerned with
finding the best modulus which can be applied to a given mapping $f$.

By $X,Y$ we will always denote real Banach spaces,
$B_X$ stands for the closed unit ball.

Let $V\subset X$ be convex with \mbox{non-empty} interior $\Int V$.
 For a given continuous mapping $f:V\to Y$ we denote by
 $df(x)\in\mathcal{L}(X;Y)$ the bounded
 linear operator which is the Fr\'echet
 derivative at $x\in\Int V$ (if it exists). By $df(x)[h]$ we denote
 the evaluation of the derivative in the direction $h\in X$.
By $\mathcal C^{1,+}(V;Y)$ we denote the space of
 continuous mappings from $V$ into $Y$ which have a uniformly continuous
 derivative in the interior $\Int V$. In the scalar case, we use shortened notation
 $\mathcal C^{1,+}(V)=\mathcal C^{1,+}(V;\er)$. It is well-known and easy to
 check that uniformly differentiable mappings from bounded sets are Lipschitz.

Let us point out, without
going into details, that Theorem 6.45 in \cite{HJ} implies that if
$f\in\mathcal C^{1,+}(B_{c_0};Y)$ is a non-compact mapping, then there
exists a point $x^{**}\in B_{c_0^{**}}$ such that
$d\left(f^{**}\right)(x^{**})$ is a non-weakly compact bounded linear
operator from $\ell_\infty$ into $Y^{**}$.
(For the precise meaning of the bidual mapping $f^{**}$ see \cite{HJ}
Chapter 6.) In particular, by the result mentioned above
on the linear operators from $\ell_\infty$, $d f^{**}$ fixes a copy
of $\ell_\infty$, hence $Y^{**}$
contains a copy of $\ell_\infty$.
It remains an open question if $Y$ itself contains a copy of $c_0$.
It should be noted that the problem cannot be solved by
means of differentiation, in view of the next simple example. Indeed,
choosing
a surjective increasing $C^\infty$-smooth function $\phi\colon\er\to\er$
such that $\phi(0)=0$ and $d\phi(0)=0$,
one can show that the mapping $\Phi\colon c_0\to c_0$ defined by
$\Phi ((x_k)_{k=1}^\infty)=(\phi(x_k))_{k=1}^\infty$
belongs to $\mc C^{1,+}(c_0;c_0)$, it is surjective, but $d\Phi(x)$
is a compact linear operator from $c_0$ into $c_0$ for every $x\in c_0$.

Recall that $\ell_2$ is a linear quotient of $\ell_\infty$
\cite[p. 141]{HMVZ}.
Therefore, by \cite{H2} (or \cite[Theorem 6.68]{HJ}), there exists a surjective
second degree polynomial from $\ell_\infty$ onto $\ell_1$, and hence a surjective
second degree polynomial onto any separable Banach space.
So the non-weak compactness of
{$\overline{F(B_{\ell_\infty})}$}
is not sufficient for concluding that $Y$ contains a copy of $\ell_\infty$.

In the present note
we will give a uniformly differentiable
variant of Rosenthal's theorem when the initial space is
$\ell_\infty(\Gamma)$. For convenience, as it is usual in this area,
we will assume that the index set
$\Gamma$ is the minimal ordinal representing the cardinal $\card{\Gamma}$.
In order to avoid confusion we will use the symbol $\omega_0$ for the smallest
infinite ordinal, while the symbol $\omega$ will denote modulus function.

In addition, by passing to
infinite-dimensional case via {ultraproducts}, in the last part of our note
we will derive a finite-dimensional counterpart of the result.
To this end we will generalise \cite[Theorem 6.45]{HJ} for uniformly
differentiable mappings
which are not necessarily bidual extensions of uniformly differentiable
mappings.

\section{Auxiliary results}

Let us formulate some results on a long (i.e. transfinite)
 Schauder basic sequence
$\{x_\gamma\}_{\gamma<\Gamma}$
in a Banach space $X$ which will be useful for our purposes.
For the definition and more details on the long Schauder basis
see \cite[p. 132]{HMVZ}.
In particular,  if the long Schauder
basic sequence  is
seminormalized,
i.e. $0<\eta<\|x_\gamma\|<\rho<\infty$, for all $\gamma\in\Gamma$, 
then {there} is
a seminormalized biorthogonal long sequence $\{\phi_\gamma\}_{\gamma<\Gamma}$
of functionals in $X^*$ \cite[p. 134]{HMVZ}.

 We say that
$\{x_\ga\}_{\ga<\Gamma}\subset X$ is
a weakly null (long) sequence provided for every $\phi\in X^*$,
$\lim_{\ga\to\Gamma}\phi(x_\ga)=0$.

\begin{proposition}\label{w-null}
Let $X$ be a Banach space, $\Gamma$ an infinite regular cardinal and
let $\{x_\ga\}_{\ga<\Gamma}\subset X$ be a seminormalized
 weakly null sequence.
Then there is a long Schauder basic subsequence
$\{y_\ga\}_{\ga<\Gamma}$ of
$\{x_\ga\}_{\ga<\Gamma}$. In particular, there is a bounded set
$\{\phi_\ga\}_{\ga<\Gamma}\subset X^*$ which is biorthogonal to
 $\{y_\ga\}_{\ga<\Gamma}$,
i.e. $\phi_\gamma(y_\alpha)=\delta_{\gamma,\alpha}$.
\end{proposition}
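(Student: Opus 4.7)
The plan is to construct $\{y_\gamma\}_{\gamma<\Gamma}$ by transfinite induction on $\gamma$, extending the classical Bessaga--Pe\l czy\'nski selection principle to the long setting. At each stage $\gamma<\Gamma$, assume $\{y_\alpha=x_{\sigma(\alpha)}\}_{\alpha<\gamma}$ has already been selected via some strictly increasing $\sigma:\gamma\to\Gamma$, and set $Y_\gamma := \cspan\{y_\alpha:\alpha<\gamma\}$. Since $\Gamma$ is an uncountable regular cardinal, the density character of $Y_\gamma$ is at most $\max(|\gamma|,\aleph_0) < \Gamma$.

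To select $y_\gamma$ via a Mazur-type step, I would fix small parameters $\epsilon_\gamma,\delta_\gamma>0$, pick an $\epsilon_\gamma$-net $\{z_j\}_{j\in J_\gamma}$ of the unit sphere of $Y_\gamma$ with $|J_\gamma|<\Gamma$, and by Hahn--Banach choose norming functionals $\phi_j\in X^*$ satisfying $\|\phi_j\|=1$ and $\phi_j(z_j)=1$. Weak nullness of $\{x_\gamma\}$ provides, for each $j\in J_\gamma$, an ordinal $\beta_j<\Gamma$ such that $|\phi_j(x_\beta)|<\delta_\gamma$ for every $\beta>\beta_j$. By the regularity of $\Gamma$ and the fact $|J_\gamma|<\Gamma$, the supremum $\beta^*:=\sup_{j\in J_\gamma}\beta_j$ remains below $\Gamma$; combined with $\sup_{\alpha<\gamma}\sigma(\alpha)<\Gamma$, this allows me to take $\sigma(\gamma)$ strictly larger than both and set $y_\gamma:=x_{\sigma(\gamma)}$. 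A standard net-approximation then yields $\|y\|\le(1+\eta_\gamma)\|y+ty_\gamma\|$ for every $y\in Y_\gamma$ and every scalar $t$, where $\eta_\gamma$ can be made arbitrarily small by shrinking $\epsilon_\gamma$ and $\delta_\gamma$. Iterating this inequality through finite-support combinations $\sum a_i y_{\gamma_i}$ establishes that the partial sum projections $P_\beta$ are uniformly bounded, so $\{y_\gamma\}_{\gamma<\Gamma}$ is a long Schauder basic sequence; the bounded biorthogonal family $\{\phi_\gamma\}\subset X^*$ then arises by extending, via Hahn--Banach, the coordinate functionals defined on $\cspan\{y_\alpha\}_{\alpha<\Gamma}$.

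The main obstacle is the uniform control of the basis constant across the transfinite iteration: the Mazur step contributes multiplicative factors $(1+\eta_\gamma)$ which accumulate on each finite subsupport, and these must remain bounded by a fixed constant independent of the subsupport chosen. Achieving this requires a careful calibration of the parameters $\epsilon_\gamma$ and $\delta_\gamma$ at each stage, balancing the local selection tolerance against the cumulative effect, while fully exploiting the regularity of $\Gamma$ and the sub-$\Gamma$ density of each $Y_\gamma$.
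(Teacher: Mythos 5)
The paper does not actually prove this proposition: for $\Gamma=\omega_0$ it invokes the classical Bessaga--Pe\l czy\'nski selection principle, and for uncountable regular $\Gamma$ it cites Talponen's Theorem~1.1 \cite{T}. Your proposal instead tries to run the Mazur selection argument transfinitely, and the ``main obstacle'' you name in your last paragraph is not a calibration issue to be finessed --- it is a genuine gap that defeats this approach for uncountable $\Gamma$. The one-step inequality $\|y\|\le(1+\eta_\gamma)\|y+t y_\gamma\|$ controls only the adjunction of the single vector $y_\gamma$ to $Y_\gamma$. To bound the initial-segment projection $P_\beta$ on a vector with finite support, whose part above $\beta$ is $\{\gamma_1<\dots<\gamma_k\}$, you must peel off $y_{\gamma_k},\dots,y_{\gamma_1}$ one at a time, accumulating the factor $\prod_{i=1}^{k}(1+\eta_{\gamma_i})$. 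Uniform boundedness of these products over \emph{all} finite subsets of $\Gamma$ is equivalent to $\sum_{\gamma<\Gamma}\eta_\gamma<\infty$ (since $1+\sum_{\gamma\in F}\eta_\gamma\le\prod_{\gamma\in F}(1+\eta_\gamma)\le e^{\sum_{\gamma\in F}\eta_\gamma}$), and a sum of uncountably many strictly positive terms is infinite; so you would need $\eta_\gamma=0$ for all but countably many $\gamma$. The Mazur step cannot deliver $\eta_\gamma=0$: the $\epsilon_\gamma$-net of the (now infinite-dimensional) sphere of $Y_\gamma$ and the weak-null tolerance $\delta_\gamma$ always leave a strictly positive defect. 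Hence no choice of the parameters $\epsilon_\gamma,\delta_\gamma$ keeps the basis constant finite.

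What is actually needed at stage $\gamma$ is the stronger estimate $\|y\|\le C\|y+z\|$ with a \emph{fixed} $C$, for all $y\in\cspan\{y_\alpha\}_{\alpha<\gamma}$ and all $z$ in the closed span of the entire (as yet unconstructed) tail; arranging a subsequence with this uniform separation property is precisely the content of Talponen's theorem, whose proof is not a naive transfinite iteration of the countable argument. Your construction is correct and standard for $\Gamma=\omega_0$ (where $\sum_n\eta_n<\infty$ can be arranged), but for uncountable regular $\Gamma$ you should either cite \cite{T} as the paper does or supply a genuinely different selection mechanism.
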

\begin{proof}
If $\Gamma=\omega_0$, the result is classical \cite[p. 195]{FHHMZ}.
If $\Gamma>\omega_0$ then the result was proved by Talponen 
\cite[Theorem 1.1]{T}.
\end{proof}

The key ingredient of the proof of the main result, Theorem
\ref{thm:infinite}, will be the next lemma
originating from \cite{Ha}. Our formulation below is formally
somewhat stronger (due to the use of $\eta$), but in fact the
proof of the original result yields our conclusion as well.
Alternatively, it is rather easy to derive the stronger version
by bootstrapping of the original lemma, in a similar way as in the proof
of Lemma \ref{main2}.
We will denote by $\{e_j\}_{j=1}^{n}$ the unit basis of $\ell_\infty^n$.

\begin{lemma}\label{main} {\cite[Lemma 6.27]{HJ}}

For each modulus $\omega$,
for every $L>0$ and every $\vep>0$, there is an $M(\omega,L,\vep)\in\en$
such that if $n\geq M(\omega,L,\vep)$ and if
$f\in\mathcal C^{1,+}(B_{\ell_\infty^n})$ is an $L-$Lipschitz
function whose derivative $df$ has modulus of continuity $\omega$,
then there exists $j\in\{1,\dots,n\}$ for which
\[
|f(\eta e_j)-f(0)|<\vep,\;{\text{whenever}}\;|\eta|\le1.
\]
\end{lemma}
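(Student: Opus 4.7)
My plan is to combine a pigeonhole argument on the $\ell_1$-summability of directional derivatives with a bootstrap that exploits the modulus $\omega$ of $df$.

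After normalizing $f(0)=0$, the key observation is that for every $x\in B_{\ell_\infty^n}$ the functional $df(x)$ belongs to $(\ell_\infty^n)^*=\ell_1^n$ with $\sum_{j}|df(x)[e_j]|\le L$, so at any fixed base point only at most $L/\delta$ coordinates can have $|df(x)[e_j]|\ge\delta$. By the $L$-Lipschitz bound along each axis, it suffices to verify $|f(\eta_k e_j)|<\vep/2$ on a $(\vep/3L)$-net $\eta_1,\dots,\eta_K$ of $[-1,1]$ with $K$ of order $L/\vep$. Writing $f(\eta_k e_j)=\int_0^{\eta_k}df(te_j)[e_j]\,dt$ and applying the mean value theorem, if the bad set $B_k=\{j:|f(\eta_k e_j)|\ge\vep/2\}$ is nonempty, then for each $j\in B_k$ some $t_j\in[0,\eta_k]$ satisfies $|df(t_j e_j)[e_j]|\ge\vep/(2|\eta_k|)$.

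When the scale is such that $\omega(|\eta_k|)\le\vep/(4|\eta_k|)$, uniform continuity of $df$ propagates this pointwise bound to $|df(0)[e_j]|\ge\vep/(4|\eta_k|)$, and the $\ell_1$-pigeonhole applied at $x=0$ forces $|B_k|\le 4L|\eta_k|/\vep$. Summing the resulting geometric series over $k$ bounds $|\bigcup_k B_k|$ by a quantity polynomial in $L/\vep$, and for $n$ larger than this bound we obtain a good coordinate $j\notin\bigcup_k B_k$, which by the Lipschitz net reduction yields $|f(\eta e_j)|<\vep$ on all of $[-1,1]$.

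The main obstacle is that $\omega$ is arbitrary and may be large at scale $1$, so the condition $\omega(|\eta_k|)\le\vep/(4|\eta_k|)$ can fail at the larger net points; this is precisely where a bootstrap is needed. I would fix a scale $s=s(\omega,L,\vep)>0$ so small that $\omega(s)$ is negligible against $\vep$, first apply the small-scale argument above to the rescaled function $\tilde f(y)=f(sy)$ to produce a coordinate $j$ with $|f(\eta e_j)|<\vep'$ on $[-s,s]$, and then iterate $\lceil 1/s\rceil$ times by shifting the base point along the \emph{same} axis $\mathbb R e_j$ and using uniform continuity of $df$ to carry the bound from one sub-segment of length $s$ to the next. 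Summing the per-step errors over the $O(1/s)$ iterations gives the uniform statement on $[-1,1]$. The delicate combinatorial point is ensuring that one coordinate $j$ survives all iterations simultaneously, and it is this pigeonhole bookkeeping that forces the explicit dependence of $M(\omega,L,\vep)$ on the inverse modulus $\omega^{-1}$ at scales determined by $\vep/L$.
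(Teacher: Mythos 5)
The paper does not actually prove this statement; it quotes it verbatim from \cite[Lemma 6.27]{HJ}, which in turn goes back to \cite{Ha}. So your attempt has to be measured against that known proof, and it falls short at exactly the point where the real difficulty of the lemma sits. Your first stage is correct: since $f$ is $L$-Lipschitz, $\|df(x)\|_{\ell_1^n}=\sum_j|df(x)[e_j]|\le L$; the mean value theorem gives, for each bad coordinate $j$ at a net point $\eta_k$, a $t_j$ with $|df(t_je_j)[e_j]|\ge\vep/(2|\eta_k|)$; and provided $\omega(|\eta_k|)\le\vep/(4|\eta_k|)$ this transfers to the single point $0$, where the $\ell_1$ bound caps the number of such $j$. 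This yields the conclusion on $[-s,s]$ for a scale $s=s(\omega,\vep)$ with $\omega(s)$ small, for all but boundedly many coordinates. But that small-scale statement is essentially free; the content of the lemma is the passage to scale $1$, where $\omega$ gives no control.

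Your bootstrap does not close that gap. To pass from $[is,(i+1)s]$ you must control, for each surviving coordinate $j$, the derivative $df(is\,e_j)[e_j]$ --- and the base point $is\,e_j$ now depends on $j$. The points $\{is\,e_j\}_j$ are pairwise at $\ell_\infty$-distance $is$, which is not small, so neither the modulus $\omega$ nor the $\ell_1$ bound at any single point relates these quantities to one another; the pigeonhole that worked at the origin (where all axes pass through one point) simply has no analogue at stage $i\ge1$. Concretely, if $G_i$ is the set of coordinates still good on $[-is,is]$, your argument gives for each $j\in G_i\setminus G_{i+1}$ a lower bound on $|df(is\,e_j)[e_j]|$, but there is no inequality summing these over $j$, so $|G_i\setminus G_{i+1}|$ is not bounded and the ``geometric series'' count is unjustified. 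Uniform continuity cannot ``carry the bound from one sub-segment to the next'' either, since after $1/s$ steps the accumulated displacement is of order $1$ and the error is $\omega(1)$. The actual proof overcomes this with a genuinely different device: a multi-stage induction on roughly $L/\vep$ levels in which one builds a single perturbation vector supported on many disjoint coordinates --- which, crucially in the sup-norm, still has small norm --- so that the relevant derivative evaluations are all taken near one point and a definite amount of the Lipschitz/oscillation budget is extracted at each level. Without an idea of that kind, the proposal does not prove the lemma.
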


\begin{lemma}\label{main2}

{For each modulus $\omega$,
for every $L>0$,  and every $\vep>0$, there is an
$N(\omega,L,\vep)\in\en$
such that if $n\geq N(\omega,L,\vep)$ and if
$f\in\mathcal C^{1,+}(B_{\ell_\infty(\Gamma)})$, where $\Gamma$ is an infinite regular cardinal, is an $L-$Lipschitz
function whose derivative $df$ has  modulus of continuity $\omega$,
then given any $\{v_k\}_{k=1}^n\subset B_{\ell_\infty(\Gamma)}$ such that $\supp(v_k)\cap\supp(v_l)=\emptyset, k\neq l$, there exists $k\in\{3,\dots,n\}$ such that
\[
|f(\lambda v_1+\mu v_2+\eta v_k)-
f(\lambda v_1+\mu v_2)|<\vep,\;\text{whenever}\;
 |\lambda|, |\mu|, |\eta|\le1.
\]}
\end{lemma}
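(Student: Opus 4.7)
The plan is to combine Lemma~\ref{main} with a finite net argument in $(\lambda,\mu)$ and a pigeonhole on the number of ``bad'' coordinates. Uniformity in $(\lambda,\mu)$ will be handled by the $L$-Lipschitz property of $f$; the nontrivial point is the existence of a single coordinate $k$ that works simultaneously for \emph{all} $(\lambda,\mu)\in[-1,1]^2$.

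First I would fix $\delta=\vep/(8L)$ and choose a finite $\delta$-net $\{(\lambda_i,\mu_i)\}_{i=1}^p\subset[-1,1]^2$, where $p$ depends only on $L$ and $\vep$. Since $v_1,\dots,v_n$ are disjointly supported elements of $B_{\ell_\infty(\Gamma)}$, the map $(a_3,\dots,a_n)\mapsto\sum_{k=3}^{n}a_kv_k$ is norm-decreasing from $\ell_\infty^{n-2}$ into $\ell_\infty(\Gamma)$, and $\lambda_iv_1+\mu_iv_2+\sum_{k=3}^{n}a_kv_k\in B_{\ell_\infty(\Gamma)}$ whenever $\|a\|_\infty\leq 1$. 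Consequently the functions
\[
g_i(a_3,\dots,a_n):=f\bigl(\lambda_iv_1+\mu_iv_2+\textstyle\sum_{k=3}^{n}a_kv_k\bigr)-f(\lambda_iv_1+\mu_iv_2)
\]
all lie in $\mc C^{1,+}(B_{\ell_\infty^{n-2}})$, are $L$-Lipschitz with $dg_i$ of modulus $\omega$, and satisfy $g_i(0)=0$.

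Next, for each $i\in\{1,\dots,p\}$ I would set
\[
B_i:=\bigl\{k\in\{3,\dots,n\}:\ \sup\nolimits_{|\eta|\leq 1}|g_i(\eta e_k)|\geq\vep/2\bigr\}.
\]
The key observation is that Lemma~\ref{main} in fact bounds $|B_i|$: restricting $g_i$ to the $|B_i|$-dimensional coordinate subcube spanned by $\{e_k\}_{k\in B_i}$ produces a function in $\mc C^{1,+}(B_{\ell_\infty^{|B_i|}})$ with the same Lipschitz and modulus bounds in which \emph{every} coordinate violates the conclusion of Lemma~\ref{main} at level $\vep/2$, so the contrapositive forces $|B_i|<M(\omega,L,\vep/2)$. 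Provided $n-2>p\bigl(M(\omega,L,\vep/2)-1\bigr)$, a pigeonhole on $B_1\cup\dots\cup B_p$ yields $k^*\in\{3,\dots,n\}$ avoiding every $B_i$, and thus $|g_i(\eta e_{k^*})|<\vep/2$ simultaneously for all $i$ and all $|\eta|\leq 1$.

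To conclude, for arbitrary $(\lambda,\mu)\in[-1,1]^2$ I would pick an index $i$ with $|\lambda-\lambda_i|,|\mu-\mu_i|\leq\delta$; disjointness of the supports of $v_1,v_2$ gives $\|(\lambda-\lambda_i)v_1+(\mu-\mu_i)v_2\|_\infty\leq\delta$, and the $L$-Lipschitz property of $f$ combined with the bound at the net point produces $|f(\lambda v_1+\mu v_2+\eta v_{k^*})-f(\lambda v_1+\mu v_2)|\leq 2L\delta+\vep/2<\vep$ by a triangle inequality. The main obstacle in this outline is the apparent weakness of Lemma~\ref{main}---it exhibits only a single good direction---and the crux is the restriction-to-bad-directions trick, which upgrades it to a quantitative upper bound on the number of bad directions and precisely enables the $p$-fold pigeonhole.
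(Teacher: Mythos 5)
Your proposal is correct and follows essentially the same route as the paper's proof: a finite net in the $(\lambda,\mu)$-parameters handled by the Lipschitz constant, the contrapositive of Lemma~\ref{main} applied to the restriction of $f$ to the disjointly supported directions to bound the number of bad coordinates by $M(\omega,L,\cdot)-1$ per net point, and a pigeonhole over the net to extract a single good $k$. The only differences are in the choice of constants and net granularity, which are immaterial.
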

\begin{proof}

{Let $\{q_i\}_{i=1}^K$ be an $\frac{\varepsilon}{3L}$-dense subset of $[-1,1]$.
We show now that $N(\omega,L,\vep)=K^2M(\omega, L,\frac\vep3)+2$
suffices for the conclusion of the lemma. So, let $n$, $f$ and $\{v_j\}_{j=1}^n$ be as in the hypothesis. Then, for every
$(i,j)\in\{1,\dots,K\}^2$, the set of $k$'s such that there exists $|\eta|\le1$ for which
\[
|f(q_i v_1+q_j v_2+\eta v_k)-
f(q_i v_1+q_j v_2)|\ge\frac\vep3
\]
has cardinality at most $M(\omega,L,\frac\vep3)-1$. Indeed, apply Lemma \ref{main} to the function $\tilde f: B_{\ell_\infty^{M(\omega, L,\frac\vep3)}}\to\R$ given by 
$$
\tilde f\left(\sum_{l=1}^{M(\omega, L,\frac\vep3)}x_le_l\right)=f\left(q_iv_1+q_jv_2+\sum_{l=1}^{M(\omega, L,\frac\vep3)}x_lv_{j_l}\right),
$$
where $\{v_{k_l}\}_{l=1}^{M(\omega, L,\frac\vep3)}$ is a subsequence of $\{v_k\}_{k=3}^n$.
  So after discarding the unsuitable
sets of $k$'s for each $(q_i,q_j)$, $i,j=1,\dots, K$, we
find a suitable $k$ that fits the inequalities
\[
|f(q_i v_1+q_jv_2+\eta v_k)-
f(q_i v_1+q_jv_2)|<\frac\vep3,\;\text{whenever}\;
|\eta|\le1,
\]
for all $i,j=1,\dots, K$ simultaneously.
Finally, the conclusion is reached thanks to the density of $q_i$'s.}
\end{proof}

\begin{lemma}\label{main3}

For each modulus $\omega$ there exists
$\delta=\delta(\omega)>0$
such that if
$f\in\mathcal C^{1,+}([0,1])$ {has a} derivative $df$ {with} modulus of continuity $\omega$,
and {if} $f(x+\delta)-f(x)\ge\frac12\delta$
then ${df(t)\ge\frac14\; \text{for all}\; t\in[x,x+\delta]}$.
\end{lemma}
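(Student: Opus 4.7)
The plan is very short because the statement is essentially a quantitative version of the fact that a uniformly continuous derivative cannot be far from its average on a small interval.

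First I would apply the mean value theorem on $[x,x+\delta]$: assuming $x,x+\delta \in [0,1]$, there exists $s\in(x,x+\delta)$ with
\[
df(s)=\frac{f(x+\delta)-f(x)}{\delta}\geq\frac12.
\]
This pins down a single point in $[x,x+\delta]$ where the derivative is comfortably above $\frac14$.

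Next I would choose $\delta=\delta(\omega)>0$ to be any positive number such that $\omega(\delta)\leq \frac14$; such a $\delta$ exists because $\omega$ is a modulus, hence continuous at $0$ with $\omega(0)=0$. Then for every $t\in[x,x+\delta]$ we have $|t-s|\leq\delta$, and the hypothesis on the modulus of continuity of $df$ gives
\[
|df(t)-df(s)|\leq \omega(|t-s|)\leq \omega(\delta)\leq \tfrac14.
\]
Combining this with the previous bound yields $df(t)\geq df(s)-\tfrac14\geq \tfrac14$, which is the desired conclusion.

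There is no real obstacle; the only thing worth noting is that the statement does not specify that $[x,x+\delta]\subset[0,1]$, but this is implicit since $f$ is only defined on $[0,1]$, so one must implicitly assume $x+\delta\leq 1$ for the hypothesis $f(x+\delta)-f(x)\geq\tfrac12\delta$ to make sense. Under that tacit assumption the argument above works as stated, and $\delta(\omega)$ depends only on $\omega$ (e.g.\ one may set $\delta(\omega):=\sup\{\delta>0:\omega(\delta)\leq 1/4\}$, or any smaller positive value).
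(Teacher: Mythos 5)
Your proof is correct and rests on the same core idea as the paper's: locate one point of $[x,x+\delta]$ where $df$ is at least comparable to the average slope $\frac12$, then use the uniform continuity of $df$ (via the modulus $\omega$) to propagate a lower bound to the whole interval. The only real difference is how that point is produced. You invoke the mean value theorem to get an interior point $s$ with $df(s)\ge\frac12$ and then require $\omega(\delta)\le\frac14$; the paper instead writes $f(x+\delta)-f(x)=\int_0^\delta df(x+t)\,dt$, bounds the integrand by $df(x)+\omega(t)$ with $\omega\le\frac18$ on $[0,\delta]$, deduces $df(x)\ge\frac38$ at the left endpoint, and then spreads this over the interval. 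Both arguments are valid; yours is marginally more economical and even tolerates the weaker condition $\omega(\delta)\le\frac14$. Your closing remark about the tacit assumption $x+\delta\le 1$ applies equally to the paper's version, as does the (harmless) convention that $df$ is understood at the endpoints of $[0,1]$ via its continuous extension from the interior, where it is uniformly continuous.
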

\begin{proof}
Since the modulus function $\omega$ is continuous and $\lim_{t\to0}\omega(t)=0$,
we can choose $\delta=\delta(\omega)>0$ so that
\[
{\omega(t)\le\frac18\;\text{for all}\;t\leq\delta.}
\]
By the fundamental theorem of calculus,
\[
f(x+\delta)=f(x)+\int_0^\delta df(x+t)dt.
\]
 {As $\omega$ is a modulus of $df$,} we have 
 \begin{equation}\label{eq:modulus}
 df(x)-\omega(t)\le df(x+t)\le df(x)+\omega(t). 
 \end{equation}
 Hence
\[
\frac12\delta\le f(x+\delta)-f(x)\le df(x)\delta+\int_0^\delta\omega(t)dt
\le df(x)\delta+\frac18\delta.
\]
{So, $df(x)\ge\frac38$ and the choice of $\delta$ along with (\ref{eq:modulus}) implies that
$df(x+t)\ge\frac14$ for all $t\in[0,\delta]$.}
\end{proof}

\section{Main results}

Our main result is the following.

\begin{theorem}\label{thm:infinite}(GCH)
Let $X$ be a Banach space, $\Gamma$ be an infinite regular cardinal.
Suppose that {$F\in\mathcal C^{1,+}(B_{\ell_\infty(\Gamma)}; X)$} is
such that $\inf_{\gamma<\Gamma}\|F(e_\gamma){-F(0)}\|>0$.
Then there exists $x\in B_{\ell_\infty(\Gamma)}$ and
$\Gamma'\subset\Gamma$ of the same cardinality as $\Gamma$
so that $dF(x)\upharpoonright_{{\ell_\infty(\Gamma')}}$
is an isomorphism. In particular, $X$ contains a copy of
$\ell_\infty(\Gamma)$.
\end{theorem}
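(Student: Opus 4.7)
The proof plan reduces Theorem~\ref{thm:infinite} to an application of Theorem~\ref{rose} to the single bounded linear operator $dF(y)$ for a suitably chosen $y\in B_{\ell_\infty(\Gamma)}$. After translating we may assume $F(0)=0$; set $c=\inf_{\gamma<\Gamma}\|F(e_\gamma)\|>0$, and let $L$ be the Lipschitz constant of $F$ and $\omega$ the modulus of continuity of $dF$. By Hahn--Banach, for each $\gamma<\Gamma$ pick $\phi_\gamma\in B_{X^*}$ with $\phi_\gamma(F(e_\gamma))\ge c/2$, and consider the scalar function $g_\gamma\colon[0,1]\to\R$, $g_\gamma(t)=\phi_\gamma(F(te_\gamma))$. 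Each $g_\gamma$ belongs to $\mc C^{1,+}([0,1])$, is $L$-Lipschitz, satisfies $g_\gamma(0)=0$ and $g_\gamma(1)\ge c/2$, and has $dg_\gamma(t)=\phi_\gamma(dF(te_\gamma)[e_\gamma])$ with modulus of continuity $\omega$.

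I would next apply a scaled form of Lemma~\ref{main3} to each $g_\gamma$. Choose $\delta=\delta(\omega,c)>0$ small enough that $\omega(\delta)$ is a small fraction of $c$, partition $[0,1]$ into $\lfloor 1/\delta\rfloor$ intervals of length $\delta$, and pigeonhole on the total increment $g_\gamma(1)-g_\gamma(0)\ge c/2$: some subinterval $[t_\gamma,t_\gamma+\delta]$ absorbs increment at least a fixed multiple of $c\delta$, and the scaled Lemma~\ref{main3} upgrades this to a pointwise bound $dg_\gamma(t)\ge c/8$ throughout $[t_\gamma,t_\gamma+\delta]$. Since $t_\gamma$ takes only finitely many values, the regularity of $\Gamma$ (any partition into fewer than $\Gamma$ many classes has a class of cardinality $\Gamma$) yields a subset $\Gamma_0\subset\Gamma$ of cardinality $\Gamma$ and a common $t^*\in[0,1-\delta]$ with $t_\gamma=t^*$ for every $\gamma\in\Gamma_0$, giving $\phi_\gamma(dF(t^*e_\gamma)[e_\gamma])\ge c/8$ uniformly in $\gamma\in\Gamma_0$.

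The heart of the proof, and the step I expect to be the main obstacle, is to transfer these estimates from the varying centres $t^*e_\gamma$ to one common point $y\in B_{\ell_\infty(\Gamma)}$. The natural candidate is the diagonal vector $y=t^*\chi_{\Gamma_0}$, for which $\|y-t^*e_\gamma\|_\infty=t^*$ for every $\gamma\in\Gamma_0$, so the uniform continuity of $dF$ gives
\[
\bigl|\phi_\gamma\bigl(dF(y)[e_\gamma]\bigr)-\phi_\gamma\bigl(dF(t^*e_\gamma)[e_\gamma]\bigr)\bigr|\le\omega(t^*).
\]
The delicate point is forcing $\omega(t^*)<c/8$: the crude pigeonhole above does not control where $t^*$ lands in $[0,1-\delta]$. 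I would address this by a finer combinatorial extraction, splitting into a ``small $t^*$'' case (handled directly by the above estimate) and a ``large $t^*$'' case, where a rescaling/iterative argument centred at $t^*\chi_{\Gamma_0}$ and relying on Lemma~\ref{main2} to suppress the off-diagonal perturbation $y-t^*e_\gamma$ reduces matters to the small case. Once $\|dF(y)[e_\gamma]\|\ge c/16$ holds for every $\gamma$ in some $\Gamma_1\subset\Gamma_0$ of cardinality $\Gamma$, the proof concludes by applying Theorem~\ref{rose} to the bounded linear operator $dF(y)\colon\ell_\infty(\Gamma_1)\to X$: we obtain $\Gamma'\subset\Gamma_1$ of the same cardinality as $\Gamma$ on which $dF(y)$ acts as an isomorphism, which in particular embeds $\ell_\infty(\Gamma)$ into $X$.
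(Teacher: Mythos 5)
Your opening moves (reduction to $F(0)=0$, scalarization via functionals $\phi_\gamma$, locating an interval $[t_\gamma,t_\gamma+\delta]$ of large increment via Lemma~\ref{main3}, and pigeonholing over finitely many interval positions using the regularity of $\Gamma$) do match the endgame of the paper's proof. But the step you yourself flag as ``the main obstacle'' --- transferring the derivative bound from the varying centres $t^*e_\gamma$ to a single point --- is a genuine gap, and the two devices you propose for it cannot work as described. First, the uniform-continuity estimate $|\phi_\gamma(dF(y)[e_\gamma])-\phi_\gamma(dF(t^*e_\gamma)[e_\gamma])|\le\omega(t^*)$ is useless except when $t^*$ lies in a tiny neighbourhood of $0$ determined by $\omega$ and $c$, and nothing forces the good interval to sit there (the increment of $g_\gamma$ over $[0,s]$ is at most $Ls$, so for generic $F$ all of the increment can be concentrated far from $0$). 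Second, Lemma~\ref{main2} cannot ``suppress the off-diagonal perturbation $y-t^*e_\gamma$'': that lemma only asserts that among $n$ \emph{disjointly supported candidate} perturbations at least one is harmless for a \emph{single} function $f$; here you would need a \emph{fixed} perturbation $t^*\chi_{\Gamma_0\setminus\{\gamma\}}$ to be simultaneously harmless for $\Gamma$-many functions $\phi_\gamma\circ F$, which is a statement of a different (and false, in general) type. The ``rescaling/iterative argument'' reducing the large-$t^*$ case to the small one is not an argument but a restatement of the problem.

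What the paper actually does at this point is quite different and is where all the work (and the GCH hypothesis) lives: it replaces $\Gamma$ by an $N$-branching tree $T$ of height $\Gamma$ with $\card(T)=\Gamma$ (this is where GCH enters), and runs a transfinite induction choosing a branch $\{t_\alpha\}$ and vectors $z_\alpha$ so that, by repeated applications of Lemma~\ref{main2} to the $N$ disjointly supported candidate ``futures'' at each node, each scalar function $\phi_{t_\alpha}\circ F$ is $\vep$-insensitive both to the partial sum $\eta\sum_{\beta<\alpha}z_\beta$ already built and to any norm-one vector supported on the chosen future of the branch. Consequently the increment of $\phi_\alpha\circ F$ over $[p_\alpha,q_\alpha]$ in the direction $z_\alpha$ survives (up to $4\vep$) after adding \emph{all} the other coordinates, and Lemma~\ref{main3} is then applied to the one-variable function $t\mapsto\phi_\alpha\circ F\bigl(\sum_{\beta\ne\alpha}\lambda z_\beta+tz_\alpha\bigr)$ at the common diagonal point --- not, as in your plan, at $t^*e_\gamma$ followed by a continuity estimate. (The paper also first extracts a long basic subsequence from the weakly null family $\{F(e_\gamma)\}$ to obtain uniformly bounded biorthogonal functionals; your Hahn--Banach functionals could plausibly substitute for this, but that is a side issue compared with the missing stabilization mechanism.)
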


\begin{proof}
{Passing from $F$ to $F-F(0)\chi_{B_{\ell_\infty(\Gamma)}}$, we may assume that $F(0)=0$ and that $\inf_{\gamma<\Gamma}\|F(e_\gamma)\|>0$.} 

Let $\omega$ be the modulus of continuity of $dF$.
Observe that the seminormalized long
sequence  $\{x_\alpha\}_{\alpha<\Gamma}$, where
$x_\alpha=F(e_\alpha), \alpha\in\Gamma$, is weakly null.
Indeed, the space $c_0(\Gamma)$ is a $\mathcal{W}_1$ space 
\cite[Theorem 6.30]{HJ},
hence for every $\vep>0$ and $\phi\in X^*$, the set
$\{\alpha: |\phi\circ F(e_\alpha)|>\vep\}$ is finite.

Using  Proposition \ref{w-null}, there is a bounded biorthogonal system
$\{x_\alpha, \phi_\alpha\}_{\alpha<\Gamma}$
 in $X\times X^*$, $\|x_\alpha\|\ge1, \|\phi_\alpha\|\le K$. Note that
 the family of real valued functions
 $\{\phi_\alpha\circ F(\cdot)\}_{\alpha<\Gamma}$
 is equi-uniformly differentiable with modulus $K\omega(\cdot)$ and equi-Lipschitz
 with the constant $KL$. Moreover, of course,
 $\phi_\alpha\circ F(e_\beta)=\delta_{\alpha,\beta}$.

 Fix $\delta=\delta(K\omega)$ from Lemma \ref{main3}, $\vep=\frac{\delta}{20K}$,
  {and $N>N(K\omega,KL,\vep,)$, where $N(K\omega,KL,\vep,)$ is from Lemma \ref{main2}.}

As a first step we replace the index set $\Gamma$
by a partially ordered set $(T,\preceq)$ of the same cardinality.
For $\alpha<\Gamma$ we denote $T_{\alpha}=\{t:[0,\alpha]\to\{1,\dots,N\}\}$.
We let $T=\cup_{\alpha<\Gamma}T_\alpha$.
Since $\card(T_\alpha)=\{1,\dots,N\}^{\alpha}=\alpha^+\le \Gamma$, using the GCH,
it follows from our assumptions that
$\card(T)=\card(\cup_{\alpha<\Gamma} T_\alpha)=\Gamma$ (see 
\cite[p. 48]{J} for
details on these standard facts). Note that if
$\Gamma=\omega_{{0}}$ then all $T_\alpha$ are in fact finite sets and
the assumption GCH is not needed.

The partial order is defined as follows. Let $s\in T_\beta$, $t\in T_\alpha$.
Then $s\preceq t$ if and only if
$\beta\le\alpha$ and $ t\upharpoonright_{[0,\beta]}=s$. We will keep
the notation $e_t, t\in T$, for the unit vectors in $\ell_\infty(T)$.
We will also denote by $\phi_{t}$ the corresponding biorthogonal
functionals originally indexed by indices  $\alpha<\Gamma$.
For convenience, we define
$T^t_\alpha=\{s\in T_\alpha: s\succeq t\}$, whenever
$t\in T_\beta, \beta<\alpha$, and we also let
$T^t=\cup_{\alpha>\beta} T^t_\alpha$.
Clearly, $T^t\ne\emptyset$ for any $t\in T$.

It is clear that $T^s_\alpha$ is disjoint from $T^t_\alpha$, unless
$s\preceq t$ (resp. $t\preceq s$),
in which case $T^t_\alpha\subset T^s_\alpha$
(resp. $T^s_\alpha\subset T^t_\alpha$). Similarly,
$T^t$ is a subset of $T^s$ {(resp. $T^s\subset T^t$)} if and only if $s\preceq t$ {(resp. $t\preceq s$)}, otherwise they
are disjoint.

We begin a transfinite  construction in $\alpha<\Gamma$
of transfinite sequences $\{t_\alpha\}_{\alpha<\Gamma}$,
$\{m_\alpha\}_{\alpha<\Gamma}$, and
 $\{z^j_\alpha\}_{\alpha<\Gamma, j=1,\dots {N}}$ so that
 the following conditions are satisfied.
\vvs

1. $t_\alpha\in T_\alpha$ is an increasing long
sequence, i.e. $t_\alpha\succeq t_\beta$ for $\alpha\ge\beta$.

2. $m_\alpha\in\{1,\dots, N\}$.

3.

\begin{equation*}
 z^j_\alpha=\begin{cases} e_{t_\alpha}& \text{ for } j=m_\alpha\\
\\0&\text{ for } j\ne m_\alpha.\endcases
\endgroup
\end{equation*}

4. If $|\lambda|,|\eta|\le1$, $z^j_{\alpha}\ne0$, $\|u\|\le1$, and
$\text{supp}(u)\subset {\{t_{\alpha+1}\}\cup} T^{t_{\alpha+1}}$, then

\[
|\phi_{t_\alpha}\circ F(\eta\sum_{\beta<\alpha} z^j_\beta+\lambda z^j_\alpha+u)-
\phi_{t_\alpha}\circ F(\eta\sum_{\beta<\alpha} z^{{j}}_\beta+\lambda z^j_{\alpha})|\le\vep.
\]

5. If $|\lambda|,|\eta|\le1$, $z^j_{\alpha}\ne0$,  then

\[
|\phi_{t_\alpha}\circ F(\eta\sum_{\beta<\alpha} z^j_\beta+\lambda z^j_\alpha)-
\phi_{t_\alpha}\circ F(\lambda z^j_{\alpha})|\le\vep.
\]

First step for $\alpha=0$. Choose an arbitrary $t_0\in T_0$, $m_0=1$,
$z^1_0=e_{t_0}$, $z^j_0=0$
for $j=2,\dots, N$. This choice satisfies the conditions 1{--}3 and 5.
In order to satisfy the condition 4 for $\alpha=0$ we will need
to select the appropriate $t_1$. We proceed by contradiction, using
 Lemma \ref{main2}. Indeed, the set of available extensions
 $t_1\succeq t_0$ has $N$ elements $s_1,\dots, s_N$. Their corresponding
 sets $\{s_j\}\cup T^{s_j}, j\in\{1,\dots,N\}${,} are pairwise disjoint.
 Assuming that for
 every $j\in\{1,\dots,N\}$ there exists {a} $u_j\in\ell_\infty(T)$ with {the} support
 contained in $\{s_j\}\cup T^{s_j}$ and $\|u_j\|\le1$ which violates
 the condition 4,
  we reach a contradiction
 with Lemma \ref{main2} in the setting {$v_1=0$, $v_2=e_{t_0}$,
 $v_j=\frac1{\|u_j\|}u_j$, $j=3,\dots,N$, and $f=\phi_{t_0}\circ F$.}
   So there exists an extension $t_1$
 such that 4 holds.

 Inductive step to reach $\alpha=\Lambda<\Gamma$.
 Suppose we have already constructed the system
 $\{t_\alpha\}_{\alpha<\Lambda}$,
$\{m_\alpha\}_{\alpha<\Lambda}$, and
 $\{z^j_\alpha\}_{\alpha<\Lambda, j=1,\dots N}$
 and all conditions 1{--}3 and 5 are satisfied for $\alpha<\Lambda$, and
 condition 4 is satisfied if $\alpha+1<\Lambda$.
 First we wind $t_\Lambda$ so that 4 will hold for all
 $\alpha+1<\Lambda+1$.

In case $\Lambda$ is a limit ordinal,
denote by $r:[0,\Lambda)\to\{1,\dots,N\}$ the function
such that $r\upharpoonright_{[0,\alpha]}=t_\alpha, \alpha<\Lambda$.
This function is well-defined, as the sequence $\{t_\alpha\}_{\alpha<\Lambda}$
is $\preceq$-increasing. We let

\begin{equation*}
t_{\Lambda}(\alpha)=\begin{cases} r(\alpha)& \text{ for } \alpha<\Lambda\\
\\1&\text{ for } \alpha=\Lambda{.}\endcases
\endgroup
\end{equation*}

Observe that $T_\Lambda\subset T_\alpha$ for all $\alpha<\Lambda$ and
so the condition 4 will remain valid for all $\alpha+1<\Lambda+1$. 

In the non-limit case $\Lambda=\gamma+1$ we denote again
by $s_1,\dots, s_N$ the set of all immediate followers of $t_\gamma$.
We claim that there is some $t_\Lambda=s_j$ so that for every $u$ of norm not
exceeding $1$ and supported
by $\{t_\Lambda\}\cup T^{t_\Lambda}$  we have

\[
|\phi_{t_\gamma}\circ F(\eta\sum_{\beta<\gamma}z^{m_\gamma}_\beta +\lambda
 z^{m_\gamma}_\gamma+
u)-\phi_{t_\gamma}\circ F(\eta\sum_{\beta<\gamma}z^{m_\gamma}_\beta +\lambda
 z^{m_\gamma}_\gamma)
|\le\vep.
\]

Arguing by contradiction again, it suffices to apply
 Lemma {\ref{main2}} in the setting $f=\phi_{t_\gamma}\circ F$,
{$v_1=\sum_{\beta<\gamma} z^{m_\gamma}_\beta$, $v_2=z^{m_\gamma}_\gamma$ and $v_j, j=3,\dots, N,$} 
be any
vector of norm at most one supported by $\{s_j\}\cup T^{s_j}$.

Next, in order to find $m_\Lambda$,
apply Lemma \ref{main2} in the setting
{$v_1=0$, $v_2=e_{t_\Lambda}$, $v_j=\sum_{\beta<\Lambda} z^j_\beta$,
$j\in\{3,\dots N\}$,
$f=\phi_{t_\Lambda}\circ F$.
We obtain that there exists some $m_\Lambda\in\{3,\dots,N\}$} such that
if $|\lambda|,|\eta|\le1$ then
\[
|\phi_{t_\Lambda}\circ F(\eta\sum_{\beta<\Lambda} z^{m_\Lambda}_\beta+
\lambda e_{t_\Lambda})-
\phi_{t_\Lambda}\circ F(\lambda e_{t_\Lambda})|\le\vep.
\]

To finish the inductive step, let

\begin{equation*}
z_{\Lambda}^{j}=\begin{cases} e_{t_\Lambda}& \text{ for }
j=m_\Lambda\\
\\0&\text{ for }j\ne m_\Lambda.\endcases
\endgroup
\end{equation*}

It is easy to verify that the inductive assumption has been recovered,
i.e. shifted by one.
This completes the  inductive step.

If $|\lambda|,|\eta|\le1$,
$\text{supp}(u)\subset {\{t_{\alpha+1}\}\cup T^{t_{\alpha+1}}}$, $\|u\|\le1$, then

\[
|\phi_{t_\alpha}\circ F(\sum_{\beta<\alpha}\eta z^{m_\alpha}_\beta+\lambda z^{m_\alpha}_\alpha+u)-
\phi_{t_\alpha}\circ F(\lambda z^{m_\alpha}_\alpha)|\le
\]
\[
|\phi_{t_\alpha}\circ F(\sum_{\beta<\alpha}\eta z^{m_\alpha}_\beta+\lambda z^{m_\alpha}_\alpha+u)-
\phi_{t_\alpha}\circ F(\sum_{\beta<\alpha}\eta z^{m_\alpha}_\beta+\lambda z^{m_\alpha}_\alpha)|+
\]
\[
|\phi_{t_\alpha}\circ F(\sum_{\beta<\alpha}\eta z^{m_\alpha}_\beta+\lambda z^{m_\alpha}_\alpha)-
\phi_{t_\alpha}\circ F(\lambda z^{m_\alpha}_\alpha)|\le2\vep.
\]

Now choose $j\in\{1,\dots,N\}$ such that $\card\{\alpha: z^j_\alpha\ne0\}=\card\Gamma$.
Of course, the set $\{\alpha: z^j_\alpha\ne0\}$ is order isomorhic to $\Gamma$,
so we may reindex it to become again $\Gamma$, and obtain as a result
a long sequence $\{z_\alpha\}_{\alpha<\Gamma}$ of unit vectors in
$\ell_\infty(\Gamma)$ such that the following condition is satisfied.
If $|\lambda|,|\eta|\le1$,
$\text{supp}(u)\subset[\alpha+1,\Gamma)$, $\|u\|\le1$, then

\[
|\phi_\alpha\circ F(\sum_{\beta<\alpha}\eta z_\beta+\lambda z_\alpha+u)-
\phi_\alpha\circ F(\lambda z_\alpha)|\le2\vep.
\]

Recall that $\phi_\alpha\circ F(z_\alpha)-\phi_\alpha\circ F(0)\ge1$,
$\alpha<\Gamma$. By our choice of
{$\delta=\delta(K\omega)$ so that $K\omega(t)<\frac18$ for all $t\leq\delta$,} there exists
an interval $[p_\alpha, q_\alpha]\subset[0,1]$,
$q_\alpha=p_\alpha+\delta,$ such that
$\phi_\alpha\circ F(q_\alpha z_\alpha)-\phi_\alpha\circ
F(p_\alpha z_\alpha)\ge\frac34\delta$.
Hence, if $|\eta|\le1$,
$\text{supp}(u)\subset[\alpha+1,\Gamma)$, and $\|u\|\le1$, then

\[
|\phi_\alpha\circ F(\sum_{\beta<\alpha}\eta z_\beta+q_\alpha z_\alpha+u)-
\phi_\alpha\circ F(\sum_{\beta<\alpha}\eta z_\beta+ p_\alpha z_\alpha+u)|\ge
\]
\[
|\phi_\alpha\circ F(q_\alpha z_\alpha)-
\phi_\alpha\circ F(p_\alpha z_\alpha)|-
|\phi_\alpha\circ F(\sum_{\beta<\alpha}\eta z_\beta+q_\alpha z_\alpha+u)-
\phi_\alpha\circ F(q_\alpha z_\alpha)|-
\]
\[
-|\phi_\alpha\circ F(\sum_{\beta<\alpha}\eta z_\beta+p_\alpha z_\alpha+u)-
\phi_\alpha\circ F(p_\alpha z_\alpha)|
\ge\frac34\delta-4\vep\ge\frac12\delta
\]

Clearly, there exists $\lambda\in[0,1]$ such that the set
$\Lambda=\{\gamma\in\Gamma: \lambda\in[p_\gamma,q_\gamma]\}$ has the same
cardinality as $\Gamma$.
By Lemma \ref{main3} we conclude that for every $\alpha\in\Lambda$

\[
\|dF(\sum_{\beta<\Gamma}\lambda z_\beta)[z_\alpha]\|\ge
\frac1K d(\phi_\alpha\circ F)(\sum_{\beta<\Gamma}\lambda z_\beta)[z_\alpha]
\ge{\frac1{4K}.}
\]

To finish the proof, it suffices to apply the original Theorem \ref{rose}
of Rosenthal for linear operators.

\end{proof}

{\bf Remark.}
It should be noted that the proof of the main theorem
for $\Gamma=\omega_0$ is valid in ZFC. Indeed, the proof only
uses the fact (in general depending on the GCH)
that $\Gamma\ge2^\alpha$ for every cardinal $\alpha<\Gamma$.
This condition is clearly satisfied for $\Gamma=\omega_0$.
Also, by inspection of the proof,
 it should be noted that $T=dF(x)\upharpoonright_Y$
is an isomorphism such that $l\|y\|\le\|T(y)\|\le L\|y\|, y\in Y$,
where $l=l(\omega,\vep,L)>0$ is independent of
the concrete form of $F$.

As a corollary, it follows that there does not exist any uniformly
differentiable
mapping from $\ell_\infty$ into $c_0$ which fixes the basis.
This generalises the classical theorem of Phillips which {says} that $c_0$
is not complemented in $\ell_\infty$ (see \cite[Theorem 5.6]{FHHMZ}).
~\\

\begin{corollary}(GCH)
Let $X$ be a Banach space, $\Gamma$ be a singular cardinal.
Suppose that {$F\in\mathcal C^{1,+}(B_{\ell_\infty(\Gamma)}; X)$ is such that $\inf_{\gamma<\Gamma}\|F(e_\gamma)-F(0)\|>0$.}
 Then for every cardinal $\Lambda<\Gamma$ there exists
$x\in B_{\ell_\infty(\Gamma)}$ and a
subspace $Y\subset\ell_\infty(\Gamma)$ isomorphic to $\ell_\infty(\Lambda)$
such that $dF(x)\upharpoonright_Y$ is an ismorphism.
\end{corollary}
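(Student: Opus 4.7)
The plan is to reduce the singular case to the regular case already handled by Theorem \ref{thm:infinite}. We may assume $\Lambda$ is infinite, since for finite $\Lambda$ the conclusion follows from any infinite-dimensional version. Because $\Gamma$ is singular, it is in particular a limit cardinal (in ZFC every successor cardinal is regular), so the cardinal successor $\Lambda^+$ satisfies $\Lambda < \Lambda^+ < \Gamma$ and is itself regular.

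First I would fix an identification of $\Lambda^+$ with a subset of $\Gamma$ of cardinality $\Lambda^+$, and let $\iota\colon \ell_\infty(\Lambda^+)\to\ell_\infty(\Gamma)$ be the resulting isometric coordinate embedding, sending each unit vector to the corresponding unit vector. Put $G=F\circ\iota\colon B_{\ell_\infty(\Lambda^+)}\to X$. By the chain rule $G\in\mathcal C^{1,+}(B_{\ell_\infty(\Lambda^+)};X)$ with $dG(y)=dF(\iota(y))\circ\iota$, so $G$ inherits from $F$ both its Lipschitz constant and the modulus of continuity of its derivative (since $\iota$ is a norm-one linear embedding). Moreover, since $\iota(e_\gamma)$ is a unit vector of $\ell_\infty(\Gamma)$ and $\iota(0)=0$, we have $\inf_{\gamma<\Lambda^+}\|G(e_\gamma)-G(0)\|\ge \inf_{\gamma<\Gamma}\|F(e_\gamma)-F(0)\|>0$.

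Next I would apply Theorem \ref{thm:infinite} to $G$ with the regular cardinal $\Lambda^+$; the GCH hypothesis is already available. This yields $y\in B_{\ell_\infty(\Lambda^+)}$ and $\Gamma'\subset\Lambda^+$ with $|\Gamma'|=\Lambda^+$ such that $dG(y)\upharpoonright_{\ell_\infty(\Gamma')}$ is an isomorphism. Setting $x=\iota(y)\in B_{\ell_\infty(\Gamma)}$ and using $dG(y)[h]=dF(x)[\iota(h)]$, we conclude that $dF(x)$ is an isomorphism on $\iota(\ell_\infty(\Gamma'))\subset\ell_\infty(\Gamma)$. Finally, choosing any $\Gamma''\subset\Gamma'$ with $|\Gamma''|=\Lambda$ and letting $Y=\iota(\ell_\infty(\Gamma''))$ gives a subspace of $\ell_\infty(\Gamma)$ isomorphic to $\ell_\infty(\Lambda)$, and $dF(x)\upharpoonright_Y$ is an isomorphism as a further restriction.

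I do not expect any genuine obstacle: once the elementary set-theoretic observation that a singular cardinal must be a limit cardinal is in place, the argument is simply a pullback along the coordinate embedding $\iota$, with all the real work absorbed into Theorem \ref{thm:infinite}. The only things worth double-checking are that $\iota$ is isometric with $\iota(B_{\ell_\infty(\Lambda^+)})\subset B_{\ell_\infty(\Gamma)}$ and that the uniform differentiability of $F$ transfers verbatim to $G$ via the chain rule, both of which are routine.
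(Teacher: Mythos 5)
The paper states this corollary without proof, and your argument is precisely the intended deduction: since a singular cardinal is a limit cardinal, $\Lambda^+$ is a regular cardinal with $\Lambda<\Lambda^+<\Gamma$, and restricting $F$ along the isometric coordinate embedding $\iota\colon\ell_\infty(\Lambda^+)\to\ell_\infty(\Gamma)$ preserves all hypotheses of Theorem \ref{thm:infinite} (Lipschitz constant, modulus of $dF$, and the lower bound on $\|F(e_\gamma)-F(0)\|$), after which the chain rule $dG(y)=dF(\iota(y))\circ\iota$ transfers the conclusion back. Your proof is correct and matches what the paper evidently has in mind.
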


%pp. 188
{We now recall the notion of ultraproduct following Section 4.1 of \cite{HJ}.
Let $X_n$ be Banach spaces and let $\ell_\infty(\en;\{X_n\})$ be the Banach space
$$
\left\{(x_n)_{n=1}^\infty, x_n\in X_n, \sup\{\|x_n\|,n\in\en\}<\infty\right\}
$$
with the norm given by $\sup\{\|x_n\|,n\in\en\}$. If $\mathcal U$ is an
ultrafilter on $\en$, we define the \emph{ultraproduct of $\{X_n\}$} as the
quotient space
$$
(X_n)_{\mathcal U}=\left.\ell_\infty(\en;\{X_n\})\middle/
\left\{(x_n)_{n=1}^\infty\in\ell_\infty(\en;\{X_n\}),
\lim_{\mathcal U}\|x_n\|=0\right\}\right.
$$
endowed with the canonical quotient norm. Here
$\lim_{\mathcal U}\|x_n\|\in\er$ is the limit with respect to the
ultrafilter $\mathcal U$. Then, $(X_n)_{\mathcal U}$ is a Banach space and
$\|(x_n)_{\mathcal U}\|=\lim_{\mathcal U}\|x_n\|$ for every
$(x_n)_{\mathcal U}\in (X_n)_{\mathcal U}$ represented by
$(x_n)_{n=1}^\infty\in\ell_\infty(\en;\{X_n\})$.
Here comes the finite-dimensional result.}

\begin{theorem}\label{thm:finite}
For each modulus $\omega$, $L>0$, $m\in\en$
and $\vep>0$, there is $l=l(\omega,L,\vep)>0$ and
$N(\omega,L,m,\vep)\in\en$ such that
if $n\geq N(\omega,L,m,\vep)$, $Y$ is a Banach space and
\mbox{$f\in\mc C^{1,+}(B_{\ell_\infty^n};Y)$} is an
$L$-Lipschitz mapping whose derivative $df$ has modulus of
continuity~$\omega$
and for which $\norm{f(e_i)-f(e_j)}\geq\vep$ for all
$i,j\in\left\{1,\dots,n\right\}$, $i\neq j$,
then there exists \mbox{$\mathcal J\subset \left\{1,\dots,n\right\}$}
with $\card(\mathcal J)=m$ and $x\in\ell_\infty^n$ such that
$df(x)|_{\ell_\infty(\mathcal{J})}$ is an isomorphism
with $l\|y\|\le\|T(y)\|\le L\|y\|$.
\end{theorem}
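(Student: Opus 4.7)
The plan is to deduce Theorem \ref{thm:finite} from the $\Gamma=\omega_0$ case of Theorem \ref{thm:infinite}---which, by the Remark above, holds in ZFC with an isomorphism lower bound $l^{*}=l^{*}(\omega,L,\vep)>0$ independent of the concrete form of the mapping---via a Banach space ultraproduct argument. Suppose the claimed $l,N$ do not both exist; then for $l:=l^{*}/2$ and every $k\in\en$ we can select $n_k\ge k$, a Banach space $Y_k$ and $f_k\in\mc C^{1,+}(B_{\ell_\infty^{n_k}};Y_k)$ satisfying the hypotheses of the theorem yet admitting no $\mathcal J\subset\{1,\dots,n_k\}$ of size $m$ and no $x\in\ell_\infty^{n_k}$ for which $df_k(x)|_{\ell_\infty(\mathcal J)}$ is an isomorphism with lower bound $l$.

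Fix a non-principal ultrafilter $\mathcal U$ on $\en$ and set $X:=(\ell_\infty^{n_k})_{\mathcal U}$, $Y:=(Y_k)_{\mathcal U}$. The uniform $L$-Lipschitz estimate makes $F((x_k)_{\mathcal U}):=(f_k(x_k))_{\mathcal U}$ a well-defined $L$-Lipschitz map $B_X\to Y$. Combining the uniform modulus $\omega$ on $df_k$ with the integral remainder
\[
f_k(x_k+h_k)-f_k(x_k)-df_k(x_k)[h_k]=\int_0^1\big(df_k(x_k+th_k)-df_k(x_k)\big)[h_k]\,dt,
\]
one verifies that $F$ is Fr\'echet differentiable on $\Int B_X$ with $dF((x_k)_{\mathcal U})[(h_k)_{\mathcal U}]=(df_k(x_k)[h_k])_{\mathcal U}$ and that $dF$ retains modulus $\omega$; thus $F\in\mc C^{1,+}(B_X;Y)$.

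The linear map $\iota:\ell_\infty\to X$ defined by $\iota((z(i))_{i=1}^\infty):=(\sum_{i\le n_k}z(i)e_i^{(k)})_{\mathcal U}$ is an isometry, so $G:=F\circ\iota\in\mc C^{1,+}(B_{\ell_\infty};Y)$ inherits the constants $L$ and $\omega$. From $\|f_k(e_i^{(k)})-f_k(e_j^{(k)})\|\ge\vep$ we get $\|G(e_i)-G(e_j)\|\ge\vep$ for $i\ne j$, so discarding at most one index yields $\inf_i\|G(e_i)-G(0)\|\ge\vep/2$. Theorem \ref{thm:infinite} for $\Gamma=\omega_0$ (ZFC-valid by the Remark) produces $x_\infty\in B_{\ell_\infty}$ and an infinite $\Gamma'\subset\en$ such that $dG(x_\infty)|_{\ell_\infty(\Gamma')}$ is an isomorphism with constants $l^{*}$ and $L$. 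Since $\iota$ is isometric and $dG(x_\infty)=dF(\iota(x_\infty))\circ\iota$, the same bounds pass to $dF(\iota(x_\infty))$ on $\iota(\ell_\infty(\mathcal J))$ for any $\mathcal J\subset\Gamma'$ of size $m$.

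The main technical obstacle is descending this estimate at $\iota(x_\infty)=(x_k^{*})_{\mathcal U}$ to pointwise estimates on $df_k(x_k^{*})|_{\ell_\infty(\mathcal J)}$ for $\mathcal U$-many $k$. Since $\ell_\infty(\mathcal J)\cong\ell_\infty^m$ is finite-dimensional, it suffices to fix a finite $\eta$-net $\{v^{(j)}\}$ of its unit sphere with $\eta$ small compared to $l^{*}/(1+L)$: the defining property of the ultraproduct norm gives lower bounds $\|df_k(x_k^{*})[v^{(j)}]\|\ge l^{*}-\eta$ on $\mathcal U$-sets $A_j$, whose finite intersection $A\in\mathcal U$ then supports, via a routine net-continuity argument combined with the uniform bound $\|df_k(x_k^{*})\|\le L$, the stronger estimate $\|df_k(x_k^{*})[v]\|\ge l^{*}/2=l$ for every unit $v\in\ell_\infty(\mathcal J)$ and every $k\in A$. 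Since $n_k\ge k$, picking any $k\in A$ with $n_k$ arbitrarily large contradicts the defining property of $f_k$, establishing the existence of the required $l$ and $N$.
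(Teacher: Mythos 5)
Your proposal is correct and follows essentially the same route as the paper: pass to an ultraproduct (your domain ultraproduct composed with the isometry $\iota$ yields exactly the paper's map $x\mapsto(f_k(P_{n_k}x))_{\mathcal U}$ on $B_{\ell_\infty}$), apply Theorem~\ref{thm:infinite} with $\Gamma=\omega_0$, and descend to a single $f_k$ by combining a finite net on the unit sphere of $\ell_\infty(\mathcal J)$ with the defining property of the ultrafilter. The only substantive variation is in the descent: you work directly with the derivatives $df_k(x_k^{*})[v^{(j)}]$, which requires justifying the identification $dF=(df_k)_{\mathcal U}$, whereas the paper descends at the level of the difference quotients $f_{n_0}(x+v)-f_{n_0}(x)$ and only afterwards recovers the derivative bound via the Taylor estimate (Corollary 1.99 of \cite{HJ}); both versions are sound and yield the same uniform lower bound $l(\omega,L,\vep)$.
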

\begin{proof}
{Let  $\omega$ be a modulus and} let $L>0$, $m\in\en$
and $\vep>0$. Suppose that $(f_n)_{n=1}^\infty$ is a sequence of
mappings such that for every $n\in\en$, $f_n\in\mc C^{1,+}
(B_{\ell_\infty^n};Y_n)$ for some Banach space
$Y_n$, $f_n$ is $L$-Lipschitz, $f_n(0)=0$, $df_n$ has modulus of
continuity~$\om$, and $\norm{f_n(e_i)-f_n(e_j)}\ge\vep$ for each
$i,j\in\left\{1,\dots,n\right\}$, $i\neq j$. We show that then there
exists \mbox{$n_0\in\en$,} a set
$\mathcal J\subset\left\{1,\dots,n_0\right\}$ of cardinality $m$ and a
point $x\in\ell_\infty^{n_0}$ such that
$df_{n_0}(x)|_{\ell_\infty(\mathcal{J})}$ is an isomorphism.
Since $(f_n)_{n=1}^\infty$ is an arbitrary sequence with the listed
properties, the statement of the theorem follows.

{We may regard all $f_n$'s as mappings from $B_{\ell_\infty}$ by composing with the projections
$P_n\colon\ell_\infty\to\ell_\infty^n$ given by
$P_n\left((x_i)_{i=1}^\infty\right)=(x_i)_{i=1}^n$.} Let $\mc U$ be a free ultrafilter
on $\en$. We can define a mapping
$f\colon B_{\ell_\infty}\to (Y_n)_{\mc U}$ by
$$
f\left(x\right)=\left(f_n(x)\right)_{\mc U}
$$
for $x\in B_{\ell_\infty}$. Since the mappings $f_n$ are equi-Lipschitz,
$f$ is well-defined.

Moreover, $f\in\mc C^{1,+}(B_{\ell_\infty};(Y_n)_{\mc U})$.
Indeed, Corollary 1.99 in \cite{HJ} says in particular that
$$
\left\|g(x+u)-g(x)-dg(x)[u]\right\|\leq\sup_{t\in[0,1]}\|dg(x+tu)-dg(x)\|
\|u\|
$$
for every $g\in\mathcal C^1(U;Y)$, where $U$ is an open convex subset of a
Banach space $X$ and $Y$ is a Banach space, and for every $x\in U$ and every
$u\in X$ such that $x+u\in U$.
Applying {this} to the functions $f_n$, we obtain
\begin{equation}\label{eq:taylor for n}
\normb{big}{f_n(x+u)-f_n(x)-df_n(x)[u]}\le\om(\norm u)\norm u
\end{equation}
for every $x\in \Int B_{\ell_\infty}$ and $u\in \ell_\infty$ such that
$x+u\in \Int B_{\ell_\infty}$, and every $n\in\en$.
Take $x\in \Int B_{\ell_\infty}$. Define $S\colon \ell_\infty\to
(Y_n)_{\mc U}$ by $S(u)=\bigl(df_n(x)[u]\bigr)_{\mc U}$ for
$u\in\ell_\infty$.
It is easy to see that $S$ is a bounded linear operator from $\ell_\infty$
into $(Y_n)_{\mathcal U}$. Let $u\in \ell_\infty$ be such that $x+u\in \Int B_{\ell_\infty}$.
Then by (\ref{eq:taylor for n}),
\begin{align*}
\norm{f(x+u)-f(x)-S(u)}&=\normb{big}{\bigl(f_n(x+u)-f_n(x)-
df_n(x)[u]\bigr)_{\mc U}}\\
&\leq\om(\norm{u})\norm{u}.
\end{align*}
From Theorem 1.114 in \cite{HJ} it follows that
$f\in\mathcal C^{1,+}(B_{\ell_\infty};(Y_n)_{\mc U})$ and that the
modulus of continuity of $df$ is $\tau\omega$ for some constant
$\tau\geq1$.

Besides, $\left\{f(e_k), k\in\en\right\}$ is not relatively compact in
$(Y_n)_{\mc U}$ as
$$
\|f(e_k)-f(e_l)\|=\|(f_n(e_k)-f_n(e_l))_{\mc U}\|\geq\vep
$$
for all $k,l\in\en$, $k\neq l$.
So, we can apply Theorem \ref{thm:infinite} to the mapping $f$. We
obtain an infinite set $\mathcal K$ of natural numbers and a point
$x\in \text{Int}{B_{\ell_\infty(\mathcal{K})}}$ such that 
$df(x)|_{\ell_\infty(\mathcal{K})}$ is an isomorphism.  From the proof of Theorem
\ref{thm:infinite} it follows that there exists $\xi=\xi(\omega,L,\varepsilon)$
such that $\|df(x)[z]\|\geq\xi\|z\|$ for every $z\in {\ell_\infty(\mathcal{K})}$. Denote
$\mathcal J\subset\mathcal K$ the set of the first $m$ elements of
$\mathcal K$.

If $\zeta>0$ satisfies that $\omega(\zeta)<\frac{\xi}{4\tau}$ and that
$x+\zeta B_{\ell_\infty(\mathcal{K})}\subset \Int B_{\ell_\infty(\mathcal{K})}$ and if $u\in B_{\ell_\infty(\mathcal{K})}$, $\|u\|=\zeta$,
then by Corollary 1.99 in \cite{HJ},
\begin{align*}
\|f(x+u)-f(x)\|\geq\|df(x)[u]\|-
\tau\omega(\|u\|)\|u\|\geq(\xi-\tau\omega(\zeta))\zeta>\frac{3}{4}\xi
\zeta.
\end{align*}
Choose $q\in\en$ so that $\frac 1q<\frac{\xi}{4L}$. Denote
$$
Q=\left\{\sum_{j\in\mathcal J}\sigma_j\beta_je_j, \sigma_j\in\{-1,1\}
\textup{ and }\beta_j\in\left\{0,\frac{1}{q}\zeta,\dots,\frac{q-1}{q}\zeta,
\zeta\right\}\textup{~for all~} j\in\mathcal J\right\}.
$$
For each $v\in Q$, $\|v\|=\zeta$, the set
$$N_v=\left\{n\in\en,\|f_n(x+v)-f_n(x)\|>\frac{3}{4}\xi\zeta\right\}$$
belongs to the ultrafilter $\mathcal U$. Therefore the intersection
$\bigcap_ {v\in Q,\|v\|=\zeta}N_v$ is an infinite set. Take
$n_0\in\bigcap_ {v\in Q, \|v\|=\zeta}N_v$ such that $n_0\geq\max \mc J$.
Then, given $u\in B_{\ell_\infty(\mathcal{J})}$, $\|u\|=\zeta$, we find
$v\in Q$, $\|v\|=\zeta$, so that $\|u-v\|\leq\frac 1q\zeta$ and obtain that
\begin{align*}
\left\|f_{n_0}(x+u)-f_{n_0}(x)\right\|&\geq\left\|f_{n_0}\left(x+v\right)-
f_{n_0}(x)\right\|-\left\|f_{n_0}(x+u)-f_{n_0}\left(x+v\right)\right\|\\
&>\frac{3}{4}\xi\zeta-L\frac 1q\zeta>\frac{1}{2}\xi\zeta.
\end{align*}
In view of the Corollary 1.99 in \cite{HJ} again, for
$u\in B_{\ell_\infty(\mathcal{J})}$, $\|u\|=\zeta$, we have that
\begin{align*}
\|df_{n_0}(x)[u]\|\geq \left\|f_{n_0}(x+u)-f_{n_0}(x)\right\|-
\omega(\zeta)\zeta>\frac{1}{2}\xi\zeta-\frac{1}{4\tau}\xi\zeta\geq
\frac{1}{4}\xi\zeta.
\end{align*}
Hence, $df_{n_0}(x)|_{\ell_\infty(\mathcal{J})}$ is an
isomorphism with a lower bound depending only
on the modulus $\omega$, the Lipschitz constant $L$ of $f$, and $\vep$. This finishes the proof.
\end{proof}

We conclude by deriving a corollary that witnesses the relation between
just proved Theorem \ref{thm:finite} and Lemma 6.27 in \cite{HJ}.
\begin{corollary}
\label{cor:vector lemma}
Let $\omega$ be a modulus, $L>0$, $\vep>0$ and
let $Y$ be a Banach space with non-trivial cotype. Then there is an
$N(\omega,L,\vep,Y)\in\en$ such that if $n\geq N(\omega,L,\vep,Y)$ and if
$f\in\mathcal C^{1,+}(B_{\ell_\infty^n};Y)$ is an $L-$Lipschitz mapping
whose derivative $df$ has modulus of continuity $\omega$, then there
exist $i,j\in\{1,\dots,n\}$, $i\neq j$, for which $\|f(e_i)-f(e_j)\|<\vep$.
\end{corollary}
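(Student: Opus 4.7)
The plan is to argue by contradiction, combining Theorem \ref{thm:finite} with the Maurey--Pisier characterization of non-trivial cotype: a Banach space $Y$ has non-trivial cotype if and only if the spaces $\ell_\infty^n$ are \emph{not} uniformly finitely representable in $Y$.

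Suppose the conclusion fails. Then for every $N\in\en$ there exist $n\geq N$ and a mapping $f_n\in\mc C^{1,+}(B_{\ell_\infty^n};Y)$ that is $L$-Lipschitz, whose derivative $df_n$ has modulus of continuity $\omega$, and for which $\|f_n(e_i)-f_n(e_j)\|\geq\vep$ whenever $i\neq j$. For each $m\in\en$, pick $n_m\geq N(\omega,L,m,\vep)$, where $N(\omega,L,m,\vep)$ is furnished by Theorem \ref{thm:finite}, together with such a mapping $f_{n_m}$. Applying Theorem \ref{thm:finite} yields $\mc J_m\subset\{1,\dots,n_m\}$ with $\card(\mc J_m)=m$ and a point $x_m\in\ell_\infty^{n_m}$ such that
\[
T_m:=df_{n_m}(x_m)|_{\ell_\infty(\mc J_m)}\colon\ell_\infty(\mc J_m)\to Y
\]
is an isomorphism onto its image satisfying $l\|y\|\le\|T_m(y)\|\le L\|y\|$ with $l=l(\omega,L,\vep)>0$ \emph{independent of $m$}. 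Consequently $\ell_\infty^m$ embeds $(L/l)$-isomorphically into $Y$ for every $m$, i.e.\ the family $\{\ell_\infty^m\}_{m\in\en}$ is uniformly finitely representable in $Y$. By the Maurey--Pisier theorem this is incompatible with $Y$ having non-trivial cotype, giving the desired contradiction.

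The only real content beyond invoking the cited results is recognizing that the uniformity of the lower constant $l=l(\omega,L,\vep)$ in Theorem \ref{thm:finite} — guaranteed by the remark following Theorem \ref{thm:infinite} — is precisely what converts the nonlinear hypothesis into the linear finite representability needed to trigger the Maurey--Pisier dichotomy. Since no step requires modifying $f$ (the Lipschitz constant, modulus, and differences $f(e_i)-f(e_j)$ are unaffected by translation, so no normalization by $f(0)$ is necessary), the argument proceeds without further obstacle.
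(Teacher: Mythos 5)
Your proposal is correct and follows essentially the same route as the paper: argue by contradiction, apply Theorem \ref{thm:finite} for each $m$ to extract copies of $\ell_\infty^m$ in $Y$ with isomorphism constants depending only on $\omega$, $L$, $\vep$ (not on $m$), and conclude via the Maurey--Pisier characterization that this contradicts the non-trivial cotype of $Y$. The emphasis you place on the uniformity of the lower constant $l(\omega,L,\vep)$ is exactly the point the paper makes when it notes that $d(Y_m,\ell_\infty^m)\leq C(\omega,L,\vep)$ independently of $m$.
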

\begin{proof}
Suppose that the statement does not hold. Then for every $m\in\en$ there
is
{$n\geq N(\omega,L,m,\vep)$,} where $N(\omega,L,m,\vep)$ is the constant
obtained in Theorem \ref{thm:finite}, and there is $f\in\mathcal
C^{1,+}(B_{\ell_\infty^n};Y)$ which is $L-$Lipschitz and such that its
derivative $df$ has modulus of continuity $\omega$ and that
$\|f(e_i)-f(e_j)\|\geq\vep$ for all $i,j\in\{1,\dots,n\}$, $i\neq j$.
So, according to Theorem \ref{thm:finite}, $Y$ contains a subspace $Y_m$
isomorphic to $\ell_\infty^m$. Moreover, from the proof of
Theorem \ref{thm:finite} it follows that
$d(Y_m,\ell_\infty^m)\leq C(\omega,L,\vep)$, where
$d(Y_m,\ell_\infty^m)$ is the Banach-Mazur distance of
$Y_m$ and $\ell_\infty^m$ and $C(\omega,L,\vep)$ is a constant
which depends on $\omega,\,L,\,\vep,$ but does not depend on $m$.
This is by Maurey and Pisier's characterization \cite{MP} a
contradiction with the non-trivial cotype of $Y$.
\end{proof}

\end{document}